\newtheorem{thm}{Theorem}
\newtheorem{lem}[thm]{Lemma}
\newtheorem*{thm*}{Theorem}
\newtheorem*{cor*}{Corollary}
\newtheorem*{lem*}{Lemma}
\newtheorem*{prop*}{Proposition}
\theoremstyle{definition}
\newtheorem*{defn*}{Definition}
\newtheorem*{prob*}{Problem}
\theoremstyle{definition}
\theoremstyle{definition}
\newtheorem*{conj*}{Conjecture}
\newtheorem*{example*}{Example}
\renewcommand{\today}{\number \day \space \ifcase \month \or January\or%
  February\or March\or April\or May\or June\or July\or August\or%
  September\or October\or November\or December\fi \space \number \year}
\begin{document}
\title{Asymptotic Enumeration of Labelled Interval Orders}
\author{Graham Brightwell}
\email{G.R.Brightwell@lse.ac.uk}
\author{Mitchel T.\ Keller}
\thanks{The second author was supported in this research by a Marshall
  Sherfield Fellowship.}
\email{M.T.Keller@lse.ac.uk}
\address{Department of Mathematics\\London School of Economics\\Houghton Street\\London\\WC2A 2AE\\United Kingdom}
\date{29 November 2011}
\maketitle

\begin{abstract}
Building on work by Zagier, Bousquet-M\'elou et al., and Khamis,
we give an asymptotic formula for the number of labelled interval orders on an $n$-element set.
\end{abstract}

\section{Introduction}

In his monograph \cite{fishburn:intordsbook}, Fishburn notes the
absence of definitive results for the enumeration of interval orders
and labelled semiorders, standing in contrast to the situation for
interval graphs. In \cite{hanlon:intgraph-enum}, Hanlon provides
extensive enumerative results for various classes of interval
graphs. However, until recently, the question of interval
\emph{orders} was untouched. In \cite{bousquet:intord-enum},
Bousquet-M\'elou et al. established a relationship between a
particular class of sequences of nonnegative integers they termed
\emph{ascent sequences} and unlabelled interval orders. This allowed
them to determine the ordinary generating function for the number
unlabelled interval orders. For additional results building on the
work of Bousquet-M\'elou on ascent sequences, see
\cite{claesson:matchings-posets, 
  dukes:intord-indist, dukes:ascent-upper-triangular, kitaev:intord-stats}.

Bousquet-M\'elou et al. also established a connection to Vassiliev invariants of knots via Stoimenow's relation of them to what he termed \emph{regular linearized chord diagrams} in \cite{stoimenow:chord-diagr}. They combined this with Zagier's work in \cite{zagier:chord-enum}, which determined a generating function for the number of regular linearized chord diagrams and derived an asymptotic formula for its coefficients. The combination of these results addressed half of Fishburn's inquiry regarding enumerating interval orders as
\begin{thm}[Zagier \cite{zagier:chord-enum}; Bousquet-M\'elou et al. \cite{bousquet:intord-enum}]\label{thm:unlabelled}
  The generating function for the number of unlabelled interval orders with $n$ points is
  \[I(x) = \sum_{n\geq 0}i_n x^n = \sum_{n\geq 0}\prod_{i=1}^n (1-(1-x)^i).\]
  Its coefficients satisfy \[i_n \sim n!\sqrt{n}\left(\frac{6}{\pi^2}\right)^n \left(C_0 +\frac{C_1}{n} + \frac{C_2}{n^2} + \cdots\right),\]
  with
  \[C_0 = \frac{12\sqrt{3}}{\pi^{5/2}}e^{\pi^2/12}\]
  and the remaining $C_i$ explicitly computable.
\end{thm}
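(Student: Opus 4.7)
The plan is to treat the generating-function identity and the asymptotic expansion separately: the first by a bijective combinatorial argument, the second by an analytic-modular one.

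For the generating function I would follow Bousquet-M\'elou et al.\ and work via \emph{ascent sequences}: sequences $(a_1,\ldots,a_n)$ of nonnegative integers with $a_1=0$ and $a_{k+1}\leq 1+\operatorname{asc}(a_1,\ldots,a_k)$, where $\operatorname{asc}$ counts strict ascents. The first step is to construct a bijection between unlabelled interval orders on $n$ points and length-$n$ ascent sequences. Given an interval order, process its points in a canonical order (say by nondecreasing left endpoint, tiebreaking by right endpoint) and record the rank of each new point in the partially constructed order; the inverse is a greedy insertion whose correctness relies on the characterization of interval orders as $(\mathbf{2}{+}\mathbf{2})$-free posets. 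Once the bijection is in place, counting length-$n$ ascent sequences by the current number of ascents $i$ gives, at each step, $i+1$ legal extensions that form an ascent and $i+1$ that do not, yielding a triangular transfer-matrix recursion whose generating-function solution is exactly $\sum_n \prod_{i=1}^n (1-(1-x)^i)$.

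For the asymptotics I would import Zagier's analysis. Setting $q=1-x$, one has $I(x)=\sum_n\prod_{i=1}^n(1-q^i)$, an expression that diverges at $q=1$ but whose coefficient in $x$ is nonetheless well-defined. Zagier's key observation is a ``strange identity'' linking this sum to the Dedekind eta function $\eta(\tau)$, with $q=e^{2\pi i\tau}$; the modular transformation $\tau\mapsto -1/\tau$ of $\eta$ then converts the intractable behavior near $q=1$ (equivalently $\tau=0$) into manageable behavior of $\eta$ near the cusp at infinity. This yields an asymptotic expansion of $I(1-e^{-t})$ as $t\to 0^+$ whose leading behavior is essentially $t^{-1/2}e^{\pi^2/(6t)}$ times explicit constants; a saddle-point (Hayman) coefficient extraction then produces $i_n\sim n!\sqrt{n}(6/\pi^2)^n C_0$ with the stated $C_0=12\sqrt{3}\,e^{\pi^2/12}/\pi^{5/2}$, and subleading terms in the modular expansion produce the further $C_i$ explicitly.

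The main obstacle, by a wide margin, is the rigorous execution of the modular-transformation step: the strange identity is intrinsically asymptotic rather than literal, so one must regularize the divergent sum via an Eichler-integral representation and carefully bound the resulting error terms in order to obtain an honest asymptotic expansion rather than a formal one. Once this is in hand, both the saddle-point coefficient extraction and the combinatorial bijection in the first part are structural exercises without further analytic difficulty.
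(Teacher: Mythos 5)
First, a point of order: the paper does not prove this statement. Theorem~\ref{thm:unlabelled} is quoted with attribution and used as an input to the paper's actual results (Theorems~\ref{thm:rigid-asympt} and~\ref{thm:labelled-asympt}); there is no in-paper proof to compare against. Your outline is therefore a reconstruction of the cited literature, and you do correctly identify the two sources and the division of labour between them: the ascent-sequence bijection of Bousquet-M\'elou et al.\ for the generating function, and Zagier's ``strange identity'' for the asymptotics.

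That said, both halves of your sketch contain concrete errors that would derail an actual execution. For the generating function: an ascent sequence whose prefix has $i$ ascents admits $i+2$ continuations (the values $0,1,\dots,i+1$), not $2i+2$, and whether a continuation creates a new ascent depends on the last entry $a_k$ and not on $i$ alone; so your proposed transfer matrix indexed by the ascent count does not close up. This is exactly why Bousquet-M\'elou et al.\ must track a second statistic and solve a bivariate functional equation by the kernel method to reach $\sum_n\prod_{i=1}^n(1-(1-x)^i)$ (their bijection is also a recursive element-removal, not the left-to-right endpoint scan you describe, though that is a lesser issue). For the asymptotics: since $i_n\sim n!\sqrt{n}\,(6/\pi^2)^n C_0$, the series $I(x)$ has radius of convergence zero, so Hayman/saddle-point coefficient extraction is simply not available; moreover, the content of Zagier's strange identity is that the regularized $F(e^{-t})=\sum_n\prod_{i=1}^n(1-e^{-it})$ is smooth to all orders at $t=0$, with an asymptotic expansion in nonnegative powers of $t$ --- there is no $t^{-1/2}e^{\pi^2/(6t)}$ blow-up (that is the behaviour of the partition generating function $1/(q;q)_\infty$, not of $F$). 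Run as written, your plan would produce asymptotics of the shape $e^{c\sqrt{n}}$ rather than the factorial growth in the statement. The factorial growth in fact comes from the divergence of Zagier's $t$-expansion: its coefficients are special values of $L(s,\chi_{12})$ at negative integers, whose size is controlled by the functional equation (this is where the modularity of $\eta$ genuinely enters), and the constants $C_i$, including the factor $e^{\pi^2/12}$, emerge from that estimate after the substitution $t=-\log(1-x)$ and coefficient extraction.
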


A poset is called \emph{rigid} if its automorphism group is trivial.  For an interval order $P=(X,<)$, the family of sets
$U(x) = \{ y\in X : y > x\}$, for $x \in X$, is totally ordered by inclusion.  Similarly, the family of sets $D(x) = \{ y \in X : y < x\}$, for
$x \in X$, is totally ordered by inclusion.  If the interval order $P$ has a non-trivial automorphism, say taking $x$ to $y$, then $|D(x)| = |D(y)|$
and $|U(x)| = |U(y)|$, and therefore $D(x) = D(y)$ and $U(x) = U(y)$.  In other words, for interval orders,
being rigid is equivalent to what Trotter \cite{trotter:dimbook} terms having ``no duplicated holdings''; in other words, no pair of elements
$\{x,y\}$ with $D(x) = D(y)$ and $U(x) = U(y)$. Building on the work of Bousquet-M\'elou et al., Khamis obtained the generating function for the number of rigid unlabelled interval orders in \cite{khamis:intord-enum}. His result is

\begin{thm}[Khamis \cite{khamis:intord-enum}]\label{thm:rigid}
  The generating function for the number of unlabelled rigid interval orders with $n$ points is
  \[R(x) = \sum_{n\geq 0}r_nx^n = \sum_{n\geq 0}\prod_{i=1}^n\left(1-\frac{1}{(1+x)^i}\right).\]
\end{thm}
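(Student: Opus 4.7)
The plan is to derive \thmref{thm:rigid} from \thmref{thm:unlabelled} by exhibiting a bijection between arbitrary unlabelled interval orders and pairs consisting of a rigid unlabelled interval order together with a composition recording the multiplicities of its duplicated classes. Translating this bijection into generating functions produces a substitution relation between $I(x)$ and $R(x)$ that can be inverted and combined with the explicit formula for $I(x)$ to yield the desired formula for $R(x)$.

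First I would formalise the quotient construction suggested by the discussion preceding the theorem. Given an interval order $P=(X,<)$, define an equivalence relation on $X$ by $x\sim y$ iff $D(x)=D(y)$ and $U(x)=U(y)$. The quotient $P^*$ inherits a partial order, and one checks that (i)~$P^*$ is again an interval order (the down-set and up-set chains survive the collapse), and (ii)~$P^*$ has no duplicated holdings and is therefore rigid by the observation in the excerpt. Conversely, any interval order is recovered uniquely from the data of a rigid interval order $Q$ of some size $k$ together with an assignment of a positive integer multiplicity $n_j\geq 1$ to each of its elements, summing to $n$. Because $Q$ is rigid, its trivial automorphism group means that this assignment is equivalent to a composition $(n_1,\dots,n_k)$ of $n$ into $k$ parts; no quotient by symmetry is needed. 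This is the main technical step, and the principal obstacle is to verify carefully that the quotient and expansion operations are mutually inverse in the unlabelled category.

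Counting compositions gives the convolution identity
\[
i_n = \sum_{k=1}^{n} r_k \binom{n-1}{k-1} \qquad (n\geq 1),
\]
with $i_0=r_0=1$. I would then compute
\[
I(x) = 1 + \sum_{k\geq 1} r_k \sum_{n\geq k}\binom{n-1}{k-1} x^n = \sum_{k\geq 0} r_k \left(\frac{x}{1-x}\right)^{\!k} = R\!\left(\frac{x}{1-x}\right),
\]
using the standard identity $\sum_{n\geq k}\binom{n-1}{k-1}x^n = x^k/(1-x)^k$. Inverting the substitution $x\mapsto x/(1-x)$ (equivalently setting $y=x/(1-x)$, so $x=y/(1+y)$) gives $R(y) = I(y/(1+y))$.

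Finally, I would substitute $x=y/(1+y)$ into the Zagier--Bousquet-M\'elou expression for $I(x)$ from \thmref{thm:unlabelled}. Since $1-x = 1/(1+y)$, each factor becomes
\[
1-(1-x)^i = 1-\frac{1}{(1+y)^i},
\]
so that
\[
R(y) = \sum_{n\geq 0}\prod_{i=1}^{n}\left(1-\frac{1}{(1+y)^i}\right),
\]
as claimed. The only substantive content beyond routine manipulation is the bijective step in the second paragraph; the generating-function steps are mechanical once the multiplicity decomposition is in hand.
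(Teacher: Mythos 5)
The paper states this result without proof, citing Khamis, and merely records the relations $I(x)=R\left(\frac{x}{1-x}\right)$ and $R(x)=I\left(\frac{x}{1+x}\right)$ immediately afterwards; your argument supplies exactly the combinatorial content behind those relations (the quotient/expansion bijection, which is valid precisely because rigidity makes the multiplicity assignment a plain composition counted by $\binom{n-1}{k-1}$) and then substitutes correctly into Theorem~\ref{thm:unlabelled}. The proof is correct and is essentially the approach the paper takes implicitly.
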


It is useful to note the following relationships between the generating functions $I(x)$ and $R(x)$:
\[I(x) = R\left(\frac{x}{1-x}\right)\qquad\text{and}\qquad R(x) = I\left(\frac{x}{1+x}\right). \]
Khamis observes \cite[Theorem 5.1]{khamis:intord-enum} that $r_n = 2^{n\lg(n) + o(n\lg n)}$, but does not obtain any more precise results
about the rate of growth of $r_n$. In this note, we obtain a precise asymptotic formula for $r_n$, and use this to obtain an asymptotic formula
for the number $\ell_n$ of \emph{labelled} interval orders on $n$ points.

Our main result is that
\[ \ell_n = \frac{12 \sqrt{3}}{ \pi^{5/2}} (n!)^2 \sqrt{n} \left(\frac{6}{\pi^2}\right)^n \left( 1 + O(1/n) \right). \]

\section{Asymptotic Enumeration of Unlabelled Rigid Interval Orders}

In this section, we use the generating function found by Khamis~\cite{khamis:intord-enum} to establish an asymptotic formula for the number
$r_n$ of unlabelled rigid interval orders. In particular, we shall prove the following theorem.

\begin{thm}\label{thm:rigid-asympt}
  The number of unlabelled rigid interval orders on $n$ points is
\[ r_n \sim n! \sqrt n \left( \frac{6}{\pi^2} \right)^n \left( D_0 + \frac{D_1}{n} + \frac{D_1^2}{n} + \cdots \right) \]
with
\[ D_0 = \frac{12\sqrt{3}}{\pi^{5/2}e^{\pi^2/12}}, \]
and further $D_i$ explicitly computable.
\end{thm}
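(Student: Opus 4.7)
The approach is to invert the substitution $R(x) = I(x/(1+x))$ to express $r_n$ as a binomial transform of the sequence $(i_k)$, and then to substitute Zagier's asymptotic expansion from Theorem~\ref{thm:unlabelled}. Expanding $(x/(1+x))^k = x^k(1+x)^{-k}$ as a power series in $x$ and extracting the coefficient of $x^n$ yields, for $n\geq 1$,
\[ r_n = \sum_{k=1}^n (-1)^{n-k}\binom{n-1}{k-1}i_k, \]
reducing the problem to an asymptotic analysis of this alternating sum.

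Using $\binom{n-1}{k-1}k! = k(n-1)!/(n-k)!$ and substituting $i_k \sim k!\sqrt{k}(6/\pi^2)^k(C_0+C_1/k+\cdots)$, the contribution from the $C_0$ term becomes, after the substitution $j=n-k$,
\[ C_0\,(n-1)!\left(\frac{6}{\pi^2}\right)^n S_n, \qquad S_n := \sum_{j=0}^{n-1}\frac{(-1)^j(n-j)^{3/2}}{j!}\left(\frac{\pi^2}{6}\right)^j. \]
To estimate $S_n$, I would split the range at $j = n^{1/2}$. On the head $j\leq n^{1/2}$ I would expand $(n-j)^{3/2} = n^{3/2}\sum_{m\geq 0}\binom{3/2}{m}(-j/n)^m$ and interchange summation; the resulting $m$th coefficient is the convergent moment $\mu_m := \sum_{j\geq 0}j^m(-\pi^2/6)^j/j!$, computable from the exponential generating function $\sum_{m\geq 0}\mu_m t^m/m! = \exp\bigl(-(\pi^2/6)e^t\bigr)$. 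The leading moment $\mu_0 = e^{-\pi^2/6}$ gives $S_n = n^{3/2} e^{-\pi^2/6}(1+O(1/n))$, and hence $r_n \sim n!\sqrt{n}(6/\pi^2)^n \cdot C_0 e^{-\pi^2/6}$. The identity
$C_0 e^{-\pi^2/6} = (12\sqrt{3}/\pi^{5/2})\,e^{-\pi^2/12} = D_0$ matches the claimed constant, and combining the higher-order $\mu_m$ with the subleading Zagier coefficients $C_j$ produces explicit formulas for each $D_i$.

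The main technical obstacle is controlling the tail $j$ close to $n$, equivalently $k$ small, where Zagier's asymptotic for $i_k$ is not directly applicable. Here one uses that Theorem~\ref{thm:unlabelled} together with the finitely many small values gives the uniform bound $i_k = O(k!\sqrt{k}(6/\pi^2)^k)$, so the portion of $r_n/((n-1)!(6/\pi^2)^n)$ coming from $j > n^{1/2}$ is dominated by a constant multiple of $n^{3/2}\sum_{j > n^{1/2}}(\pi^2/6)^j/j!$, which decays super-polynomially in $n$ by Stirling's formula and is therefore absorbed into every $O(n^{-M})$ error term. The remaining bookkeeping to produce each $D_i$ explicitly is routine, provided the interchange of summation on the head is justified with sufficient uniformity in the order $m$.
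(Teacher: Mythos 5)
Your proposal is correct and follows essentially the same route as the paper: inverting $R(x)=I(x/(1+x))$ to get the alternating binomial transform $r_n=\sum_{k}(-1)^{n-k}\binom{n-1}{k-1}i_k$, substituting Zagier's expansion, and recognizing the resulting sum as $e^{-\pi^2/6}$ times a power series in $1/n$, yielding $D_0=C_0e^{-\pi^2/6}$. The only difference is that you make explicit the tail estimate for small $k$ (where the asymptotic for $i_k$ does not apply), a point the paper's proof passes over silently.
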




\begin{proof}
To establish the asymptotic formula, we begin by recalling that
  \[R(x) = I\left(\frac{x}{1+x}\right).\]
Therefore, $r_n$ is the coefficient of $x^n$ in
  \[\sum_{j=0}^\infty i_j\left(\frac{x}{1+x}\right)^j = \sum_{j=0}^\infty i_j x^j\frac{1}{(1+x)^j} =
  i_0 + \sum_{j=1}^\infty i_j x^j\left(\sum_{k=0}^\infty (-1)^k \binom{j+k-1}{k} x^k\right).\]
Thus, we have
  \[r_n = i_0 + \sum_{k=0}^{n-1} (-1)^k \binom{n-1}{k} i_{n-k}.\]

We now replace $i_{n-k}$ with the (first few terms of the) asymptotic formula for
$i_t$ from Theorem~\ref{thm:unlabelled}, and obtain
\begin{eqnarray*}
r_n &\sim& i_0 + \sum_{k=0}^{n-1} (-1)^k \binom{n-1}{k} (n-k)! \sqrt{n-k} \left( \frac{6}{\pi^2}\right)^{n-k}
\left( C_0 + \frac{C_1}{n-k} + \cdots \right) \\
&=& i_0 + (n-1)! \left( \frac{6}{\pi^2}\right)^n
\sum_{k=0}^{n-1} \frac{(-1)^k}{k!} \left( \frac{\pi^2}{6} \right)^k (n-k)^{3/2} \left( C_0 + \frac{C_1}{n-k} + \cdots \right) \\
&=& i_0 + n! \sqrt n \left( \frac{6}{\pi^2}\right)^n
\sum_{k=0}^{n-1} \frac{(-1)^k}{k!} \left( \frac{\pi^2}{6} \right)^k \left( 1 - \frac{k}{n} \right)^{3/2}
\left( C_0 + \frac{C_1}{n-k} + \cdots \right).
\end{eqnarray*}
We can now write the sum
$$
\sum_{k=0}^{n-1} \frac{(-1)^k}{k!} \left( \frac{\pi^2}{6} \right)^k \left( 1-\frac{k}{n} \right)^{3/2} \left( C_0 + \frac{C_1}{n-k} + \cdots \right)
$$
as an asymptotic expansion
$$
D_0 + \frac{D_1}{n} + \frac{D_2}{n^2} + \cdots.
$$
The $D_i$ may all be computed explicitly from the $C_i$.  For instance, the leading term is 
$$
D_0 = C_0 \sum_{k=0}^\infty \frac{(-1)^k}{k!} \left( \frac{\pi^2}{6} \right)^k = C_0 e^{-\pi^2/6},
$$
and we also have 
\begin{eqnarray*}
D_1 &=& \sum_{k=0}^\infty \frac{(-1)^k}{k!} \left( \frac{\pi^2}{6} \right)^k \left( C_1 - \frac32 C_0 k \right)  \\
&=& C_1 e^{-\pi^2/6} - \frac32 C_0 \sum_{k=1}^\infty \frac{(-1)^k}{(k-1)!} \left( \frac{\pi^2}{6} \right)^k \\
&=& C_1 e^{-\pi^2/6} + \frac{\pi^2}{4} C_0 \sum_{j=0}^\infty \frac{(-1)^j}{j!} \left( \frac{\pi^2}{6} \right)^j \\
&=& \left( C_1 + \frac{\pi^2}{4} C_0 \right) e^{-\pi^2/6}.
\end{eqnarray*}
Therefore
\[ r_n \sim n! \sqrt n \left( \frac{6}{\pi^2}\right)^n e^{-\pi^2/6} \left( C_0 + \frac{C_1 + \frac{1}{4} C_0 \pi^2}{n} + \cdots \right),\]
as desired.
%
\end{proof}

One interesting, and perhaps surprising, consequence of Theorem~\ref{thm:rigid-asympt} is that the proportion of unlabelled $n$-element interval
orders that are rigid tends to $e^{-\pi^2/6}\approx 0.193025$.  To understand this, it may help to note that $r_n/r_{n-1}$ and $i_n/i_{n-1}$
both behave as $6n / \pi^2$.  The number of unlabelled $n$-element interval orders in which there is exactly one pair of elements with duplicated
holdings is $r_{n-1}(n-1)$, which is asymptotically $i_n e^{-\pi^2/6} \pi^2/6$.  Similarly, for each fixed $k$, the number of unlabelled
$n$-element interval orders with exactly $k$ pairs of elements with duplicated holdings is equal to
$$
r_{n-k} \binom{n-k}{k} \simeq i_n \frac{\left( \frac{\pi^2}{6} \right)^k}{k!} e^{-\pi^2/6}.
$$
On the other hand, the number of unlabelled $n$-element interval orders in which there is some {\em triple} of elements with duplicated holdings
is at most $i_{n-2}(n-2) = O(i_n/n)$.
In other words, in a uniformly random unlabelled interval order, the number of pairs of elements with duplicated holdings is asymptotically
a Poisson random variable with mean $\pi^2/6$, while the probability that there is some triple with duplicated holdings tends to zero as
$n \to \infty$.


\section{Counting Labelled Interval Orders}

The best bounds on the number of labelled interval orders known to the authors appear in a paper by Brightwell, Grable, and Pr\"omel \cite{brightwell:forb-induced-posets}. In that paper, the authors use straightforward enumerative techniques to establish the following theorem.

\begin{thm}[Brightwell et al. \cite{brightwell:forb-induced-posets}]\label{thm:bound-labelled}
  Let $\ell_n$ denote the number of labelled interval orders on $n$ points. The numbers $\ell_n$ satisfy the following inequalities
  \[n^{2n-O\left(\frac{n\log\log n}{\log n}\right)}\leq \ell_n\leq \frac{(2n)!}{2^n}\leq n^{2n}\left(\frac{2}{e^2}\right)^n.\]
\end{thm}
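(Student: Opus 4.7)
My plan is to prove the two inequalities independently, both via the correspondence between interval orders and their canonical endpoint representations.

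For the upper bound $\ell_n\le (2n)!/2^n$, I would invoke the classical fact that every interval order on $[n]$ admits an interval representation in which the $2n$ endpoints are distinct. After normalising the endpoints to $\{1,2,\dots,2n\}$, each such representation is a partition of $[2n]$ into $n$ ordered pairs $(a,b)$ with $a<b$, together with a bijection from these pairs to the label set $[n]$. The total number of such objects is
\[
\frac{(2n)!}{2^n\,n!}\cdot n!=\frac{(2n)!}{2^n},
\]
and since every labelled interval order is realised by at least one representation, $\ell_n\le(2n)!/2^n$. The second inequality then follows from Stirling's approximation $(2n)!\le C\sqrt{n}(2n/e)^{2n}$, which gives $(2n)!/2^n\le C\sqrt{n}\cdot n^{2n}(2/e^2)^n$, with the polynomial prefactor absorbed into the exponential.

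For the lower bound $\ell_n\ge n^{2n-O(n\log\log n/\log n)}$, the plan is to construct an explicit rich family of labelled interval orders via a block decomposition. Choose a parameter $k$ and partition $[n]$ into blocks of size $k$, placing the intervals for different blocks on disjoint stretches of the real line so that every element of an earlier block lies strictly below every element of a later block. Within each block, use a dense family of labelled interval orders — ideally, nearly all $(2k)!/2^k$ endpoint representations modulo local equivalence. Assigning labels to blocks and choosing internal orders independently yields at least $\binom{n}{k,\dots,k}\cdot M(k)^{n/k}$ distinct labelled interval orders, where $M(k)$ is the within-block count. Optimising $k$ balances the multinomial term against the internal richness.

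The main obstacle is ensuring the within-block count $M(k)$ is close to the upper bound $(2k)!/2^k$. A rough calculation shows that to obtain the claimed exponent $2n - O(n\log\log n/\log n)$, one must take $k = n/(\log n)^{O(1)}$ and $M(k)$ essentially matching the upper bound; this amounts to showing that most endpoint configurations on $2k$ points yield distinct labelled interval orders, with a sub-polynomial overcount. Controlling this overcount — by enumerating degenerate orders with many coincident $D$-sets or $U$-sets and showing they contribute only a lower-order term — is the technical heart of the lower bound, and the reason the error exponent is $O(n\log\log n/\log n)$ rather than the sharper $O(n/\log n)$ the present paper ultimately achieves.
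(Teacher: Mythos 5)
This statement is quoted from Brightwell, Grable and Pr\"omel \cite{brightwell:forb-induced-posets} and is not proved in the paper, so there is no internal argument to compare yours against; I will judge your proposal on its own. Your upper bound is the standard counting of endpoint configurations and is fine: a labelled interval order has a representation with $2n$ distinct endpoints, and after normalising there are $(2n)!/2^n$ ways to pair up and label the endpoints. One caveat: the last inequality $(2n)!/2^n\leq n^{2n}(2/e^2)^n$ cannot be obtained by ``absorbing the polynomial prefactor into the exponential'', because Stirling gives $(2n)!/2^n\geq\sqrt{4\pi n}\,n^{2n}(2/e^2)^n$ --- the displayed inequality is off by a $\Theta(\sqrt n)$ factor (and already fails at $n=1$), which is a defect of the quoted statement rather than of your matching bound, but your claimed derivation of it does not go through.

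The genuine gap is in the lower bound, and it is twofold. First, the block construction is circular: the within-block count $M(k)$ that you need to be close to $(2k)!/2^k$ is essentially $\ell_k$, the quantity the theorem is about, and the arithmetic of $\binom{n}{k,\dots,k}M(k)^{n/k}$ forces $k$ to be as large as $n/\mathrm{polylog}(n)$ before the product can reach $n^{2n-o(n)}$; so you cannot bootstrap from small blocks where $M(k)$ might be estimated directly, and at $k\approx n$ the blocks do no work. Second, the step you call the technical heart --- that most endpoint configurations on $2k$ points give distinct labelled interval orders up to a sub-polynomial overcount --- is provably false. Comparing the upper bound with Theorem~\ref{thm:labelled-asympt} of this paper, the ratio $\bigl((2n)!/2^n\bigr)/\ell_n$ grows like $(\pi^2/3)^n/n$, so the average number of configurations realising a given labelled interval order is exponential, and only an exponentially small fraction of configurations can have polynomially bounded overcount. (An overcount of $C^k$ per block would still be compatible with the error term $O(n\log\log n/\log n)$, but you offer no argument for such a bound, and any direct argument of that kind would have to be run at scale $k\approx n$, making the decomposition superfluous.) As written, the lower bound is a plan whose key lemma is both unproved and, in the form stated, untrue.
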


Recall that the Stirling number of the second kind, denoted $S(n,k)$, is the number of ways to partition the set $[n]:=\{1,2,\dots,n\}$ into
$k$ non-empty parts. The number of surjections from $[n]$ to $[k]$ is $k! S(n,k)$.  There is a one-to-one correspondence between the set
of labelled interval orders on $[n]$ and the set of pairs $(R,S)$, where $R$ is an unlabelled rigid interval order on $k \le n$ points, and
$S$ is a surjection from $[n]$ onto the ground set of $R$.  We thus have
\[\ell_n = \sum_{k=1}^n r_k k! S(n,k).\]

Before using this sum to determine an asymptotic formula for $\ell_n$, we note the following estimate for the Stirling numbers of the second kind,
due to Hsu~\cite{hsu:stirling}.

\begin{lem}\label{lem:stirling-approx}
For each fixed $j \ge 0$,
\[S(n,n-j) \sim \frac{(n-j)^{2j}}{j! 2^j} \left( 1 + \frac{f_1(j)}{n-j} + \frac{f_2(j)}{(n-j)^2} + \cdots \right),\]
where the $f_i$ are explicit polynomials and $f_1(j) = j(2j+1)/3$.
\end{lem}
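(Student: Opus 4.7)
My plan is to derive the expansion directly from a finite sum describing $S(n,n-j)$ as a count of types of set partitions. A partition of $[n]$ into $n-j$ non-empty blocks is determined by the multiset of its block sizes; setting $\nu_k$ to be the number of blocks of size $k\geq 2$ and $n_1 = n-j-\sum_{k\geq 2}\nu_k$ for the number of singletons, the two counting constraints collapse to $\sum_{k\geq 2}(k-1)\nu_k = j$. The number of set partitions of $[n]$ with a given type is
$$\frac{n!}{(n-s)!\,\prod_{k\geq 2}(k!)^{\nu_k}\nu_k!}, \qquad s:=\sum_{k\geq 2}k\nu_k = j+\sum_{k\geq 2}\nu_k.$$
Since $j$ is fixed, only finitely many types (one for each partition of $j$) contribute, so I can organise the whole calculation as a finite sum.

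First I would pin down the dominant type. Writing $m=\sum_{k\geq 2}\nu_k$, the relation $\sum(k-1)\nu_k=j$ forces $m\leq j$, with equality exactly when $\nu_2=j$ and every higher $\nu_k$ vanishes. For this type $s=2j$, so the contribution is
$$T_0 = \frac{n!}{(n-2j)!\,2^j\,j!}.$$
Every other admissible type has $s\leq 2j-1$ and therefore contributes a factor of order at most $(n-j)^{-1}$ relative to $T_0$; more generally, a type with $s=2j-r$ contributes only to powers $(n-j)^{-i}$ with $i\geq r$.

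Next I would expand $T_0$ around $m:=n-j$. Writing $n-i=m+(j-i)$ and re-indexing $\ell=j-i$ gives
$$\frac{n!}{(n-2j)!} = m^{2j}\prod_{\ell=-j+1}^{j}\Bigl(1 + \tfrac{\ell}{m}\Bigr),$$
and expanding the finite product in elementary symmetric functions of $\{-j+1,\dots,j\}$ yields a polynomial in $m^{-1}$ whose $m^{-1}$-coefficient equals $\sum_\ell \ell = j$. To assemble all contributions to $f_1(j)$, I still need the leading term of every type with $s=2j-1$; but $m=j-1$ together with $\sum(k-2)\nu_k=1$ forces uniquely $\nu_2=j-2,\ \nu_3=1$, and a short calculation gives its contribution as $2j(j-1)/(3(n-j))$ times $T_0$. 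Summing the two sources produces
$$f_1(j) = j + \frac{2j(j-1)}{3} = \frac{j(2j+1)}{3},$$
as claimed. Higher $f_i(j)$ come from iterating this procedure: push each local expansion to higher order, and bring in all types with $s\geq 2j-i$, each contributing a polynomial in $j$ with coefficients that depend only on the finite combinatorics of partitions of $j$.

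The main obstacle is purely bookkeeping: one must verify that the power-counting claim in the second paragraph is sharp (so the asymptotic expansion genuinely terminates at each order after finitely many types), and combine the resulting finite sums without arithmetic error. No new ideas are needed beyond the type enumeration above, and the outcome agrees with Hsu's formula \cite{hsu:stirling}.
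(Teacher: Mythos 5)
Your argument is correct, and it takes a genuinely different route from the paper, because the paper does not prove this lemma at all: it cites Hsu and offers only a heuristic for the leading term, namely that the partitions of $[n]$ into $n-j$ non-empty blocks all of size at most $2$ number exactly $n(n-1)\cdots(n-2j+1)/(j!\,2^j)$. That heuristic is precisely your dominant type $T_0$ (the type $\nu_2=j$), so your proof can be read as completing the paper's remark into a self-contained elementary derivation: the constraint $\sum_{k\ge 2}(k-1)\nu_k=j$ reduces everything to a finite sum over partitions of $j$; the power counting $n!/(n-s)!=\Theta(n^{s})$ with $s\le 2j$ shows that a type with $s=2j-r$ enters only at relative order $(n-j)^{-r}$, and this bound is sharp, so the ``main obstacle'' you flag at the end is not really an obstacle --- at each order $(n-j)^{-i}$ only the finitely many types with $r\le i$ contribute, and each contributes a polynomial in $j$ because $j-\nu_2$ is determined by the partition of $r$ alone. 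Your two computations correctly give $f_1(j)=j+2j(j-1)/3=j(2j+1)/3$, which one can sanity-check against the exact values $S(n,n-1)=\binom{n}{2}$ and $S(n,n-2)=\binom{n}{3}+3\binom{n}{4}$. What your route buys is explicit, verifiable access to every $f_i$; what the paper's citation buys is brevity. One cosmetic point: you use $m$ both for $\sum_{k\ge 2}\nu_k$ and for $n-j$; choose distinct letters.
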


To get some intuition for the leading term in the lemma above, notice that the number of ways of partitioning $[n]$ into $n-j$ non-empty parts, 
with no part of size greater than~2, is exactly
$\displaystyle \frac{n(n-1) \cdots (n-2j+1)}{j! 2^j} \sim \frac{(n-j)^{2j}}{j! 2^j}$.  The number of ways of partitioning $[n]$
into $n-j$ non-empty parts, including a part of size~3, is of order $S(n,n-j) / n$.

We are now ready to establish an asymptotic formula for the number of labelled interval orders on $n$ points.

\begin{thm}\label{thm:labelled-asympt}
The number of labelled interval orders on $[n]$ is
  \[ \ell_n \sim (n!)^2 \sqrt{n} \left(\frac{6}{\pi^2}\right)^n \left( E_0 + \frac{E_1}{n} + \frac{E_2}{n^2} + \cdots \right), \]
where
  \[ E_0 = \frac{12\sqrt{3}}{\pi^{5/2}} \]
and the other $E_i$ are explicitly computable.
\end{thm}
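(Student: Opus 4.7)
The plan is to substitute the asymptotic formula for $r_k$ from Theorem~\ref{thm:rigid-asympt} and Hsu's estimate from Lemma~\ref{lem:stirling-approx} into the surjection identity
\[ \ell_n = \sum_{k=1}^n r_k\, k!\, S(n,k) = \sum_{j=0}^{n-1} r_{n-j}(n-j)!\, S(n,n-j), \]
where I have reindexed by $j = n-k$, in anticipation that the dominant contribution comes from small $j$, in the same spirit as the Poisson heuristic for duplicated holdings described after Theorem~\ref{thm:rigid-asympt}.

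For each fixed $j$, the leading asymptotics read
\[ r_{n-j}(n-j)! \sim ((n-j)!)^2 \sqrt{n-j}\left(\frac{6}{\pi^2}\right)^{n-j} D_0, \qquad S(n,n-j) \sim \frac{(n-j)^{2j}}{j!\, 2^j}, \]
and the elementary identity $(n-j)!\,(n-j)^j \sim n!$ (valid for fixed $j$) collapses their product to
\[ (n!)^2 \sqrt{n}\left(\frac{6}{\pi^2}\right)^n D_0 \cdot \frac{1}{j!}\left(\frac{\pi^2}{12}\right)^j. \]
Summing this Poisson series in $j$ produces $D_0\, e^{\pi^2/12}$, and since $D_0 = 12\sqrt 3/(\pi^{5/2} e^{\pi^2/12})$ the exponentials cancel, leaving exactly $E_0 = 12\sqrt{3}/\pi^{5/2}$, as required.

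The main obstacle is making this termwise argument rigorous, since Lemma~\ref{lem:stirling-approx} is stated only for each fixed $j$. I would cut the sum at a slowly growing threshold $j_0(n)$, apply the expansions uniformly for $j \le j_0$, and bound the tail $j > j_0$ using crude inequalities such as $r_k \le i_k$ (available from Theorem~\ref{thm:unlabelled}) together with a standard estimate like $S(n,n-j) \le n^{2j}/(j!\,2^j)$; the resulting tail bound is then smaller than $(n!)^2\sqrt n (6/\pi^2)^n / n^N$ for any fixed $N$, so the head of the sum determines every coefficient in the asymptotic expansion.

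Finally, the higher coefficients $E_1, E_2, \dots$ are obtained by retaining more terms in each expansion: Taylor expanding $\sqrt{1 - j/n}$, the ratio $(n-j)!\,(n-j)^j/n!$, and the factor $((n-j)/n)^{2j}$ in powers of $1/n$, combining with the $D_i/(n-j)^i$ and $f_i(j)/(n-j)^i$ corrections, and then evaluating the resulting polynomial-in-$j$ Poisson averages against $\sum_j (\pi^2/12)^j/j! = e^{\pi^2/12}$ in closed form, exactly as the $D_1$ computation was handled in the proof of Theorem~\ref{thm:rigid-asympt}. Each $E_i$ then emerges as an explicit linear combination of $D_0,\dots,D_i$ with coefficients that are polynomials in $\pi^2$.
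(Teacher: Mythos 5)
Your proposal follows essentially the same route as the paper's proof: the surjection identity $\ell_n=\sum_{j}r_{n-j}(n-j)!\,S(n,n-j)$, substitution of Theorem~\ref{thm:rigid-asympt} and Lemma~\ref{lem:stirling-approx}, extraction of the factor $(n!)^2\sqrt{n}(6/\pi^2)^n$, and summation of the resulting Poisson series to get $E_0=e^{\pi^2/12}D_0=12\sqrt{3}\pi^{-5/2}$. The only difference is that you make explicit the truncation and tail-bound argument needed to justify applying the fixed-$j$ expansions termwise, a point the paper leaves implicit.
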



\begin{proof}
We have
\begin{eqnarray*}
\ell_n &=& \sum_{j=0}^{n-1} r_{n-j} (n-j)! S(n,n-j) \\
&\sim& \sum_{j=0}^{n-1} \left(D_0 + \frac{D_1}{n-j} + \cdots \right) (n-j)!\sqrt{n-j}\left(\frac{6}{\pi^2}\right)^{n-j} (n-j)! S(n,n-j),
\end{eqnarray*}
where $D_0 = 12\sqrt{3}/(\pi^{5/2}e^{\pi^2/12})$.

Applying Lemma~\ref{lem:stirling-approx}, we now have
\begin{eqnarray*}
\ell_n &\sim& \sum_{j=0}^{n-1} \left(D_0 + \frac{D_1}{n-j} + \cdots \right) (n-j)!^2\sqrt{n-j}
\left(\frac{6}{\pi^2}\right)^{n-j}\cdot\\
&&\mbox{}\qquad\qquad\frac{(n-j)^{2j}}{2^j j!} \left( 1 + \frac{f_1(j)}{n-j} + \cdots \right) \\
&=& n!^2 \sqrt{n} \left(\frac{6}{\pi^2}\right)^n\,\, \sum_{j=0}^{n-1} \frac{1}{j!} \left( \frac{\pi^2}{12} \right)^j \left( \frac{(n-j)!(n-j)^j}{n!}\right)^2\\
&&\mbox{} \qquad\qquad
\sqrt{1-j/n} \left(D_0 + \frac{D_1}{n-j} + \cdots \right)\left( 1 + \frac{f_1(j)}{n-j} + \cdots \right) \\
&\sim& n!^2 \sqrt{n} \left(\frac{6}{\pi^2}\right)^n \left(E_0 + \frac{E_1}{n} + \cdots \right),
\end{eqnarray*}
where $E_0 = e^{\pi^2/12} D_0 = 12 \sqrt 3 \pi^{-5/2}$, as claimed.
\end{proof}

We may also find the distribution of the number of pairs of elements with duplicated holdings in a uniformly random labelled $n$-element
interval order.  The number of interval orders on $[n]$ with exactly $k$ such pairs is
$$
r_{n-j} (n-j)! S(n,n-j) \simeq e^{-\pi^2/12} \frac{ \left( \frac{\pi^2}{12} \right)^j }{j!} \ell_n.
$$
As before, the proportion of labelled $n$-element interval orders with a triple of elements with duplicated holdings tends to~zero.
In other words, the number of pairs of elements with duplicated holdings in a uniformly random {\em labelled} $n$-element interval order
is asymptotically a Poisson random variable with mean $\pi^2/12$.

Results about the number of pairs of elements with duplicated holdings (or, equivalently, about the automorphism group) come for free from
the methodology.  By contrast, results about the height or width of a uniformly random interval order, labelled or unlabelled, are likely to
be much harder to come by.

\bibliography{main}
\bibliographystyle{acm}

\end{document}